\newtheorem{theorem}{Theorem}[section]
\newtheorem{lemma}[theorem]{Lemma}
\newtheorem{proposition}[theorem]{Proposition}
\newtheorem{question}[theorem]{Question}
\theoremstyle{definition}
\newtheorem{definition}[theorem]{Definition}
\numberwithin{equation}{section}
\newcommand{\Z}{\mathbb Z}
\newcommand{\N}{\mathbb N}
\begin{document}

\title{Bowen entropy for actions of amenable groups}

\author [Dongmei Zheng and Ercai Chen]{Dongmei Zheng and Ercai Chen}

\address{School of Mathematical Sciences and Institute of Mathematics, Nanjing Normal University, Nanjing 210023, Jiangsu, P.R.China
 \& Department of Applied Mathematics, College of Science, Nanjing Tech University, Nanjing 211816, Jiangsu, P.R. China} \email{dongmzheng@163.com}

\address{School of Mathematical Sciences and Institute of Mathematics, Nanjing Normal University, Nanjing 210023, Jiangsu, P.R.China} \email{ecchen@njnu.edu.cn}

\subjclass[2000]{Primary: 37B40, 28D20, 54H20}
\thanks{}

\keywords {Bowen entropy, local entropy, amenable group, variational principle}

\begin{abstract}
Bowen introduced a definition of topological entropy of subset inspired by Hausdorff dimension in 1973 \cite{B}. In this paper we consider the Bowen's entropy for amenable group action dynamical systems and show that under the tempered condition, the Bowen entropy of the whole compact space for a given F{\o}lner sequence equals to
the topological entropy. For the proof of this result, we establish a variational principle related to the Bowen entropy and the Brin-Katok's local entropy formula for dynamical systems with amenable group actions.
\end{abstract}

\maketitle

\section{Introduction}
Let $(X,G)$ be a $G-$action topological dynamical system, where $X$ is a compact metric space with metric $d$ and $G$ a topological group.
In this paper, we assume $G$ is a discrete countable amenable group. Recall that a group $G$ is {\it amenable} if it admits a left invariant mean(a state on $\ell^{\infty}(G)$ which is invariant under left translation by $G$). This is equivalent to the existence of a sequence of
finite subsets $\{F_n\}$ of $G$ which are asymptotically invariant, i.e.,

$$\lim_{n\rightarrow+\infty}\frac{|F_n\vartriangle gF_n|}{|F_n|}=0, \text{ for all } g\in G.$$
Such sequences are called F{\o}lner sequences. For the detail of amenable group actions, one may refer to Ornstein and Weiss¡¯s pioneering paper \cite{OW}.

The topological entropy of $(X,G)$ is defined in the following way.

Let $\mathcal {U}$ be an open cover of $X$, the topological entropy of $\mathcal{U}$ is
$$h_{top}(G,\mathcal {U})=\lim_{n\rightarrow+\infty}\frac{1}{|F_n|}\log N\big(\mathcal{U}_{F_n}\big),$$
where $\mathcal{U}_{F_n}=\bigvee_{g\in F_n}g^{-1}\mathcal{U}$.
It is shown that $h_{top}(G,\mathcal {U})$ is not dependent on the choice of the F{\o}lner sequences $\{F_n\}$.
And the topological entropy of $(X,G)$ is
$$h_{top}(X,G)=\sup_{\mathcal U} h_{top}(G,\mathcal {U}),$$
where the supremum is taken over all the open covers of $X$.

Bowen \cite{B} introduced a definition of topological entropy on subsets inspired by Hausdorff dimension. For an amenable group action dynamical system
$(X,G)$, we define the Bowen topological entropy in the following way.

Let $\{F_n\}$ be a F{\o}lner sequence in $G$ and $\mathcal{U}$ be a finite open cover of $X$.
Denote $diam(\mathcal{U}) := \max\{diam(U) : U \in \mathcal{U}\}$. For $n\ge 1$ we denote by $\mathcal{W}_{F_n}(\mathcal{U})$ the
collection of families $\mathbf{U} = \{U_g\}_{g\in F_n}$ with $U_g\in \mathcal{U}$. For $\mathbf{U}\in\mathcal{W}_{F_n}(\mathcal{U})$ we call the integer
$m(\mathbf{U}) = |F_n|$ the length of $\mathbf{U}$ and define
\begin{align*}
X(\mathbf{U}) &= \bigcap_{g\in F_n} g^{-1}U_g \\
&=\{x\in X : gx \in U_g \text{ for }g\in F_n\}.
\end{align*}
For $Z\subset X$, we say that $\Lambda\subset \bigcup_{n\ge 1}\mathcal{W}_{F_n}(\mathcal{U})$ covers $Z$ if
$\bigcup_{\mathbf{U}\in\Lambda}X(\mathbf{U})\supset Z$. For $s\in\mathbf{R}$,
define
$$\mathcal{M}(Z,\mathcal{U}, N,s,\{F_n\})=\inf\limits_{\Lambda}\{\sum\limits_{\mathbf{U}\in \Lambda}\exp(-s m(\mathbf{U}))\}
$$
and the infimum is taken over all $\Lambda\subset \bigcup_{j\ge N}\mathcal{W}_{F_j}(\mathcal{U})$ that covers $Z$.
We note that $\mathcal{M}(\cdot,\mathcal{U}, N,s,\{F_n\})$ is a finite outer measure on X, and
$$ \mathcal{M}(Z,\mathcal{U}, N,s,\{F_n\})=\inf\{\mathcal{M}(C,\mathcal{U}, N,s,\{F_n\}):C \text{ is an open set that contains }Z\}.$$
$\mathcal{M}(Z,\mathcal{U}, N,s,\{F_n\})$ increases as $N$ increases. Define
$$\mathcal{M}(Z,\mathcal{U},s,\{F_n\})= \lim\limits_{N\rightarrow +\infty}\mathcal{M}(Z,\mathcal{U}, N,s,\{F_n\})$$
and
\begin{align*}
    h^B_{top}(\mathcal{U}, Z,\{F_n\}) &= \inf\{s : \mathcal{M}(Z,\mathcal{U},s,\{F_n\})= 0\} \\
    &= \sup\{s : \mathcal{M}(Z,\mathcal{U},s,\{F_n\}) = +\infty\}.
\end{align*}
Set
$$h^B_{top}(Z,\{F_n\})= \sup_{\mathcal {U}}
h^B_{top}(\mathcal{U}, Z,\{F_n\}),$$
where $\mathcal{U}$ runs over finite open covers of $Z$. We call $h^B_{top}(Z,\{F_n\})$ the Bowen topological
entropy of $(X,G)$ restricted to $Z$ or the Bowen topological entropy of $Z$(w.r.t. the F{\o}lner sequence $\{F_n\}$).

Similar to the Bowen topological entropy of subsets for $\Z-$actions(see, for example, Pesin \cite{P}), it is easy to show that
$$h^B_{top}(Z,\{F_n\})=\lim_{diam(\mathcal{U})\rightarrow 0}h^B_{top}(\mathcal{U}, Z,\{F_n\}).$$
So the Bowen topological entropy can be defined in an alternative way.

For a finite subset $F$ in $G$, we denote by
\begin{align}\label{df}
B_{F}(x,\epsilon)&=\{y\in X: d_{F}(x,y)<\epsilon\}\nonumber\\
&=\{y\in X: d(gx,gy)<\epsilon, \text{ for any }g\in F\}.
\end{align}

For $Z\subseteq X, s\ge0, N\in\mathbf{N}$, $\{F_n\}$ a F{\o}lner sequence in $G$ and $\epsilon> 0$, define
$$\mathcal{M}(Z,N,\epsilon,s,\{F_n\}) = \inf\sum_i\exp(-s|F_{n_i}|),$$
where the infimum is taken over all finite or countable families $\{B_{F_{n_i}}(x_i,\epsilon)\}$ such that
$x_i\in X, n_i\ge N$ and $\bigcup_i B_{F_{n_i}}(x_i,\epsilon)\supseteq Z$. The quantity $\mathcal{M}(Z,N,\epsilon,s,\{F_n\})$
does not decrease as $N$ increases and $\epsilon$ decreases, hence the following limits exist:
$$\mathcal{M}(Z,\epsilon,s,\{F_n\})=\lim_{N\rightarrow+\infty}\mathcal{M}(Z,N,\epsilon,s,\{F_n\}),\mathcal{M}(Z,s,\{F_n\})=\lim_{\epsilon\rightarrow0}\mathcal{M}(Z,\epsilon,s,\{F_n\}).$$
Bowen topological entropy $h^B_{top}(Z,\{F_n\})$ can be equivalently defined as the critical value
of the parameter $s$, where $\mathcal{M}(Z,s,\{F_n\})$ jumps from $+\infty$ to $0$, i.e.,
\begin{equation*}
\mathcal{M}(Z,s,\{F_n\})=\begin{cases}
                             0, s > h^B_{top}(Z,\{F_n\}),\\
                             +\infty, s < h^B_{top}(Z,\{F_n\}).
                        \end{cases}
\end{equation*}

In \cite{B} Bowen showed that $h_{top}(X,T)=h_{top}^B(X,T)$ for any compact metric dynamical system $(X,T)$. It nature to ask: Does this result also hold for
the amenable group action system $(X,G)$?

In this paper, we will prove this under certain condition on the F{\o}lner sequences. Although this is a topological problem, our proof
uses a measure-theoretic way.

A F{\o}lner sequence $\{F_n\}$ in $G$ is said to be {\it tempered} (see Shulman \cite{S}) if there exists a constant $C$ which is independent of $n$ such that
\begin{align}\label{tempered}
|\bigcup_{k<n}F_k^{-1}F_n|\le C|F_n|, \text{ for any }n.
\end{align}

In Lindenstrauss \cite{L}, \eqref{tempered} is also called {\bf Shulman Condition}.

Now we state our main theorem as follows.
\begin{theorem}[\bf{Main result}]\label{th-1-1}
Let $(X,G)$ be a compact metric $G-$action topological dynamical system and $G$ a discrete countable amenable group, then for any tempered F{\o}lner sequence $\{F_n\}$ in $G$
with the increasing condition
\begin{equation}\label{eq-1-2}
    \lim\limits_{n\rightarrow+\infty}\frac{|F_n|}{\log n}=\infty,
\end{equation}
we have
$$h_{top}^B(X,\{F_n\})=h_{top}(X,G).$$
\end{theorem}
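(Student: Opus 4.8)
The plan is to establish the two inequalities $h_{top}^B(X,\{F_n\})\le h_{top}(X,G)$ and $h_{top}^B(X,\{F_n\})\ge h_{top}(X,G)$ separately. The first is purely topological and follows from the covering description of the entropy, while the second is where the measure theory, and the two hypotheses on $\{F_n\}$, do all the work: I would deduce it from the classical variational principle $h_{top}(X,G)=\sup_\mu h_\mu(X,G)$ (supremum over $G$-invariant probability measures) together with a Brin--Katok local entropy formula and a mass distribution principle.

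For the upper bound, fix a finite open cover $\mathcal{U}$ of $X$ and observe that each atom of the refinement $\mathcal{U}_{F_n}=\bigvee_{g\in F_n}g^{-1}\mathcal{U}$ is exactly a set $X(\mathbf{U})$ with $\mathbf{U}\in\mathcal{W}_{F_n}(\mathcal{U})$ and $m(\mathbf{U})=|F_n|$. Selecting for each $n$ a subcover of cardinality $N(\mathcal{U}_{F_n})$ yields a family $\Lambda\subset\mathcal{W}_{F_n}(\mathcal{U})$ covering $X$, so that $\mathcal{M}(X,\mathcal{U},N,s,\{F_n\})\le N(\mathcal{U}_{F_N})\exp(-s|F_N|)$. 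If $s>h_{top}(G,\mathcal{U})$ the right-hand side tends to $0$ as $N\to\infty$, giving $h_{top}^B(\mathcal{U},X,\{F_n\})\le h_{top}(G,\mathcal{U})$; taking the supremum over $\mathcal{U}$ yields $h_{top}^B(X,\{F_n\})\le h_{top}(X,G)$. This step uses neither \eqref{tempered} nor \eqref{eq-1-2}.

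For the lower bound it suffices to prove $h_{top}^B(X,\{F_n\})\ge h_\mu(X,G)$ for an arbitrary ergodic $\mu$. I would use the Brin--Katok formula $\lim_{\epsilon\to0}\liminf_{n\to\infty}-\frac{1}{|F_n|}\log\mu(B_{F_n}(x,\epsilon))=h_\mu(X,G)$ for $\mu$-a.e.\ $x$. Given $\delta>0$, put $s=h_\mu(X,G)-\delta$; choosing $\epsilon$ small and $N_0$ large, the set $Z=\{x:\mu(B_{F_n}(x,\epsilon))\le\exp(-s|F_n|)\text{ for all }n\ge N_0\}$ has positive measure. For any cover of $Z$ by balls $B_{F_{n_i}}(x_i,\epsilon/2)$ with $n_i\ge N'\ge N_0$, re-centring at points $y_i\in Z\cap B_{F_{n_i}}(x_i,\epsilon/2)$ gives $B_{F_{n_i}}(x_i,\epsilon/2)\subseteq B_{F_{n_i}}(y_i,\epsilon)$, whence $\sum_i\exp(-s|F_{n_i}|)\ge\sum_i\mu(B_{F_{n_i}}(y_i,\epsilon))\ge\mu(Z)>0$. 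Thus $\mathcal{M}(Z,\epsilon/2,s,\{F_n\})\ge\mu(Z)>0$, so $h_{top}^B(X,\{F_n\})\ge h_{top}^B(Z,\{F_n\})\ge s$, and letting $\delta\to0$ and taking the supremum over $\mu$ finishes the bound.

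The principal obstacle is the Brin--Katok formula, which is also the only place the hypotheses on $\{F_n\}$ are needed. The direction $\liminf_n-\frac{1}{|F_n|}\log\mu(B_{F_n}(x,\epsilon))\le h_\mu$ comes from comparing a Bowen ball with the atom of a fine partition and invoking the Shannon--McMillan--Breiman theorem for amenable actions, which holds for tempered F{\o}lner sequences by Lindenstrauss's pointwise ergodic theorem \cite{L}; this is where \eqref{tempered} is used. The reverse, almost-everywhere direction is the delicate part: converting a convergence-in-measure estimate into a pointwise one requires a Borel--Cantelli argument along $\{F_n\}$, and the series one must sum is controlled by terms of order $\exp(-c|F_n|)$, which are summable precisely because $|F_n|/\log n\to\infty$ forces $\exp(-c|F_n|)\le n^{-c'}$ for large $c'$; this is where \eqref{eq-1-2} enters. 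Assembling the mass distribution estimate and its companion upper bound into a single variational principle of the form $h_{top}^B(K,\{F_n\})=\sup\{\underline h_\mu:\mu(K)=1\}$ for Borel sets $K$ then makes the specialization to $K=X$ immediate.
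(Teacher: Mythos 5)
Your proposal is correct and follows essentially the same route as the paper: the upper bound via minimal subcovers of $\mathcal{U}_{F_n}$, and the lower bound via the classical variational principle for ergodic measures, the amenable Brin--Katok formula (where temperedness and $|F_n|/\log n\to\infty$ enter exactly as you describe), and a re-centering/mass-distribution estimate on a positive-measure set. The only difference is that you inline just the single inequality $h^{B}_{top}(Z,\{F_n\})\ge \underline h_{\mu}^{loc}(\{F_n\})$ that is actually needed, whereas the paper proves the full variational principle $h^{B}_{top}(K,\{F_n\})=\sup\{\underline h_{\mu}^{loc}(\{F_n\}):\mu(K)=1\}$ (Theorem \ref{th-4-1}), whose harder direction (weighted Bowen entropy, the $5r$-covering lemma, and a dynamical Frostman lemma) is not required for the main theorem.
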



\section{Local entropy and Brin-Katok's entropy formula}
In this section, we will prove Brin-Katok's entropy formula \cite{BK} for amenable group action dynamical systems. The statement of this formula is the following.

\begin{theorem}[Brin-Katok's entropy formula: ergodic case]\label{th-3-3}
Let $(X,G)$ be a compact metric $G-$action topological dynamical system and $G$ a discrete countable amenable group.
Let $\mu$ be a $G-$ergodic Borel probability measure on $X$ and $\{F_n\}$ a tempered F{\o}lner sequence in $G$
 with the increasing condition \eqref{eq-1-2}, then for $\mu$ almost everywhere $x\in X$,
\begin{align*}
&\lim_{\delta\rightarrow 0}\liminf_{n\rightarrow+\infty}-\frac{1}{|F_n|}\log \mu(B_{F_n}(x,\delta)) \\
=&\lim_{\delta\rightarrow 0}\limsup_{n\rightarrow+\infty}-\frac{1}{|F_n|}\log \mu(B_{F_n}(x,\delta))=h_{\mu}(X,G).
\end{align*}
\end{theorem}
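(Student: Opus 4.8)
The plan is to prove the two inequalities separately and then combine them, relying on three inputs for actions of amenable groups along a tempered F{\o}lner sequence: Lindenstrauss's pointwise ergodic theorem, the Shannon--McMillan--Breiman (SMB) theorem, and the Kolmogorov--Sinai theorem $h_\mu(X,G)=\sup_{\mathcal P}h_\mu(X,G,\mathcal P)$ over finite Borel partitions \cite{L,OW}. Write
$$\underline h(x)=\lim_{\delta\to0}\liminf_{n}-\tfrac{1}{|F_n|}\log\mu(B_{F_n}(x,\delta)),\quad \overline h(x)=\lim_{\delta\to0}\limsup_{n}-\tfrac{1}{|F_n|}\log\mu(B_{F_n}(x,\delta)).$$
Both $\delta$-limits exist as increasing limits, since $\mu(B_{F_n}(x,\delta))$ decreases as $\delta\downarrow0$; clearly $\underline h(x)\le\overline h(x)$. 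It then suffices to show $\overline h(x)\le h_\mu(X,G)\le\underline h(x)$ for $\mu$-a.e. $x$.

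For the upper bound on $\overline h$, fix a finite Borel partition $\mathcal P$ with $\mathrm{diam}(\mathcal P)<\delta$. If $y$ lies in the atom $\mathcal P_{F_n}(x)=\bigcap_{g\in F_n}g^{-1}\mathcal P(gx)$ of $\mathcal P_{F_n}=\bigvee_{g\in F_n}g^{-1}\mathcal P$ containing $x$, then $d(gx,gy)\le\mathrm{diam}(\mathcal P)<\delta$ for every $g\in F_n$, so $\mathcal P_{F_n}(x)\subseteq B_{F_n}(x,\delta)$ and hence $-\frac{1}{|F_n|}\log\mu(B_{F_n}(x,\delta))\le-\frac{1}{|F_n|}\log\mu(\mathcal P_{F_n}(x))$. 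By the SMB theorem the right-hand side converges $\mu$-a.e. to $h_\mu(X,G,\mathcal P)\le h_\mu(X,G)$; letting $\delta\to0$ along a countable sequence and intersecting the full-measure sets gives $\overline h(x)\le h_\mu(X,G)$ a.e.

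The lower bound is the crux. Fix a finite partition $\mathcal P=\{P_1,\dots,P_k\}$ with $\mu(\partial\mathcal P)=0$ and write $h=h_\mu(X,G,\mathcal P)$. Given $\gamma>0$, choose $\epsilon>0$ so small that $\mu(B(\partial\mathcal P,\epsilon))\log k\le\gamma$, where $B(\partial\mathcal P,\epsilon)$ is the $\epsilon$-neighbourhood of the atom boundaries. The key geometric observation is that if $g\in F_n$ is such that $gx$ is $\epsilon$-interior to its atom, then every $y\in B_{F_n}(x,\epsilon)$ has $gy$ in the same atom as $gx$; hence $B_{F_n}(x,\epsilon)$ meets at most $k^{b_n(x)}$ atoms of $\mathcal P_{F_n}$, where $b_n(x)=\#\{g\in F_n:gx\in B(\partial\mathcal P,\epsilon)\}$. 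By Lindenstrauss's pointwise ergodic theorem, $b_n(x)/|F_n|\to\mu(B(\partial\mathcal P,\epsilon))$ for $\mu$-a.e. $x$, so by the choice of $\epsilon$ this count is at most $\exp(\gamma|F_n|)$ for all large $n$. Combining this with the SMB estimate $\mu(A)\le\exp(-|F_n|(h-\gamma))$ for typical atoms $A$ yields, after controlling the contribution of atypical atoms, a bound of the form $\mu(B_{F_n}(x,\epsilon))\le\exp(-|F_n|(h-c\gamma))$. To promote this to a statement about $\liminf_n$ valid for a.e. $x$, I would estimate the measure of the exceptional set $G_n=\{x:\mu(B_{F_n}(x,\epsilon))\ge\exp(-|F_n|(h-c\gamma))\}$ and invoke Borel--Cantelli; this is exactly where the growth hypothesis \eqref{eq-1-2} is used, since $|F_n|/\log n\to\infty$ makes the resulting bounds of the form $\exp(-c'|F_n|)$ summable in $n$. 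Thus $\liminf_n-\frac{1}{|F_n|}\log\mu(B_{F_n}(x,\epsilon))\ge h-c\gamma$ a.e.; letting $\epsilon\to0$ (so $\gamma\to0$) and taking the supremum over partitions via Kolmogorov--Sinai gives $\underline h(x)\ge h_\mu(X,G)$.

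The main obstacle is precisely the lower bound, and within it the control of atoms of atypical measure that the Bowen ball may intersect: the clean counting bound only records how many atoms are met, not the possibility that the ball grabs mass from atoms on which the SMB convergence has not yet stabilized. Handling this requires combining an Egorov-uniformized SMB estimate with the Borel--Cantelli step above, and it is here that both standing hypotheses are essential---temperedness \eqref{tempered} to license Lindenstrauss's pointwise and SMB theorems, and the growth condition \eqref{eq-1-2} to convert the resulting exponential measure estimates into an almost-everywhere $\liminf$ statement. The ergodicity of $\mu$ enters to identify $h_\mu(X,G,\mathcal P)$ as the a.e. constant value furnished by the SMB theorem, so that the partition-wise bounds assemble into the global entropy.
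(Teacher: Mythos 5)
Your proposal is correct and follows essentially the same route as the paper: the upper bound via SMB applied to a partition of diameter less than $\delta$, and the lower bound via a partition with null boundary, the pointwise ergodic theorem to control the frequency of visits of the orbit to an $\epsilon$-neighbourhood of the boundary, a count of the atoms of $\mathcal P_{F_n}$ that a Bowen ball can meet, and a Borel--Cantelli estimate on the exceptional set $G_n$ in which the growth condition \eqref{eq-1-2} makes the bounds summable. The only piece you leave as a sketch---bounding $\mu(G_n)$---is exactly the paper's double-counting step (if $x\in G_n$ then one of the few atoms covering $B_{F_n}(x,\epsilon)$ must be ``large'', there are few large atoms globally, so $G_n$ is covered by few atoms, each of small measure since it meets the Egorov-type good set), and your count $k^{b_n(x)}$ with fixed bad positions is a mild (arguably cleaner) variant of the paper's Hamming-ball-plus-Stirling count.
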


Since this formula gives an alternative definition
for metric entropy(known as local entropy), we give the following definition of local entropy in amenable group action case.

\begin{definition}
Let $(X,G)$ be a compact metric $G-$action topological dynamical system and $G$ a discrete countable amenable group. Denote by $M(X)$ the collection of Borel probability measures on $X$.
For any $\mu\in M(X)$, $x\in X,n\in\mathbf{N}$, $\epsilon>0$ and $\{F_n\}$ any F{\o}lner sequence in $G$,
denote by
$$\underline h_{\mu}^{loc}(x,\epsilon,\{F_n\})=\liminf_{n\rightarrow +\infty}-\frac{1}{|F_n|}\log \mu(B_{F_n}(x,\epsilon)).$$
Then the (lower) local entropy of $\mu$ at $x$ (along $\{F_n\}$) is defined by
$$\underline h_{\mu}^{loc}(x,\{F_n\})=\lim_{\epsilon\rightarrow 0}\underline h_{\mu}^{loc}(x,\epsilon,\{F_n\})$$
and the (lower) local entropy of $\mu$ is defined by $$\underline h_{\mu}^{loc}(\{F_n\})=\int_X\underline h_{\mu}^{loc}(x,\{F_n\})d\mu.$$
Similarly, we can define the upper local entropy.
\end{definition}

For the proof of Theorem \ref{th-3-3}, we need the following classic results in ergodic theory for amenable group actions.

Let $(X,G,\mu)$ be a measure-theoretic $G-$action dynamical system where $G$ is a discrete countable amenable group and $\mu$ is a $G-$ergodic Borel probability measure on $X$.
The ergodic theorem states that,

\begin{theorem}[{E. Lindenstrauss \cite{L}}, see also {B. Weiss \cite{W}}]\label{th-3-1}\

Let $(X,G,\mu)$ be an ergodic $G-$system, $\{F_n\}$ be a tempered F{\o}lner sequence in $G$ and $f\in L^1(X,\mathcal{B},\mu)$. Then
$$\lim_{n\rightarrow+\infty}\frac{1}{|F_n|}\sum_{g\in F_n}f(gx)=\int_{X}f(x)d\mu,$$
almost everywhere and in $L^1$.
\end{theorem}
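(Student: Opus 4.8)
The plan is to follow the standard two–part architecture for pointwise ergodic theorems: (i) establish a weak-type $(1,1)$ maximal inequality for the averaging operators $A_{n}f(x)=\frac{1}{|F_n|}\sum_{g\in F_n}f(gx)$, and (ii) prove convergence on a dense subclass of $L^1$, then combine the two by a Banach-principle argument. The temperedness hypothesis \eqref{tempered} enters only in step (i), where it is exactly the condition needed to run a covering argument adapted to the amenable group; the F{\o}lner property alone suffices for step (ii). I record at the outset two elementary facts. First, each $A_n$ is an $L^1$-contraction because $\mu$ is $G$-invariant, since $\|A_n f\|_1\le \frac1{|F_n|}\sum_{g\in F_n}\|f\circ g\|_1=\|f\|_1$. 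Second, by the von Neumann mean ergodic theorem (which holds for any F{\o}lner sequence) together with ergodicity, $A_nf\to\int_X f\,d\mu$ in $L^2$ for $f\in L^2$, hence in $L^1$; a contraction/density argument then upgrades this mean convergence to all of $L^1$. Thus only the almost-everywhere statement requires genuine work.

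For step (i), define the maximal operator $Mf(x)=\sup_{n}\frac1{|F_n|}\sum_{g\in F_n}|f(gx)|$ and aim at $\mu\{x:Mf(x)>\lambda\}\le \frac{C}{\lambda}\|f\|_1$, with $C$ the tempering constant. The heart of the matter is a Vitali-type covering lemma for the noncommutative setting: given finitely many ``tiles'' $F_n s$ (left-translates of F{\o}lner sets), one must select a subfamily covering a fixed proportion of the union while keeping the total multiplicity bounded. In a $\Z$-action a greedy largest-first selection works, but for general amenable $G$ the tiles interlock and no deterministic selection succeeds; instead I would use Lindenstrauss's randomized selection, processing the indices from largest to smallest and retaining each candidate tile independently with a suitable probability. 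The expected overlap of the retained family is controlled by the ratios $|\bigcup_{k<n}F_k^{-1}F_n|/|F_n|$, and temperedness \eqref{tempered} is precisely the uniform bound $\le C$ that keeps this expectation finite. Transferring this covering statement through the action (Calder\'on transference, using invariance of $\mu$) then yields the weak $(1,1)$ inequality.

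For step (ii), the dense subclass is spanned by the constants and the coboundaries $h-h\circ s$ with $h\in L^\infty$, $s\in G$, where $(h\circ s)(x)=h(sx)$. Constants are fixed by every $A_n$, so they converge to themselves, equal to their integral. For a coboundary, the substitution $g\mapsto sg$ gives
\begin{align*}
A_n(h-h\circ s)(x)=\frac1{|F_n|}\Big(\sum_{g\in F_n}h(gx)-\sum_{g\in sF_n}h(gx)\Big),
\end{align*}
whose modulus is at most $\|h\|_\infty\,|F_n\vartriangle sF_n|/|F_n|$ and hence tends to $0$ everywhere by the (left) F{\o}lner property; since $\int_X(h-h\circ s)\,d\mu=0$ by invariance, the pointwise limit again equals the integral. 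By the mean ergodic theorem these functions are dense in $L^2$, hence in $L^1$, so almost-everywhere convergence to $\int_X f\,d\mu$ holds on a dense subclass $\mathcal{D}\subset L^1$.

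Finally I would combine the pieces. Writing $\Omega f=\limsup_n A_nf-\liminf_n A_nf$, subadditivity gives $\Omega f\le \Omega(f-h)+\Omega h\le 2M(f-h)$ for $h\in\mathcal{D}$, and the maximal inequality yields $\mu\{x:\Omega f(x)>\lambda\}\le \frac{2C}{\lambda}\|f-h\|_1$; letting $h\to f$ in $L^1$ forces $\Omega f=0$ a.e., so $A_nf$ converges almost everywhere, and its limit is identified with $\int_X f\,d\mu$ by comparison with the $L^1$ convergence already established. The single hard step is the covering lemma underlying the maximal inequality: controlling overlaps of translated F{\o}lner sets is genuinely obstructed for noncommutative $G$, and it is exactly Lindenstrauss's probabilistic selection, powered by the tempered condition \eqref{tempered}, that removes this obstruction, while everything else is routine functional analysis.
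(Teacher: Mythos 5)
The paper does not prove this theorem: it is quoted as a known result of Lindenstrauss (see also Weiss) and used as a black box in the proofs of Theorem \ref{th-3-3} and the main result. So there is no in-paper argument to compare yours against, and the relevant question is whether your sketch would stand on its own.

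Your outline faithfully reproduces the architecture of Lindenstrauss's original proof: $L^1$-contractivity of $A_n$ and the mean ergodic theorem give convergence in norm; constants together with coboundaries $h-h\circ s$ form the dense subclass, and your bound $|A_n(h-h\circ s)|\le\|h\|_\infty\,|F_n\vartriangle sF_n|/|F_n|$ is correct for the left F{\o}lner condition as stated in the paper; the Banach-principle step combining $\Omega f\le 2M(f-h)$ with a weak $(1,1)$ bound is standard, as is identifying the a.e.\ limit with the $L^1$ limit along a subsequence. The gap is that the entire mathematical content of the theorem is concentrated in the one step you describe rather than prove: the weak-type $(1,1)$ maximal inequality, and behind it the randomized covering lemma for translated F{\o}lner tiles. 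You correctly diagnose where temperedness enters and why a greedy largest-first selection fails for noncommutative $G$, but ``I would use Lindenstrauss's randomized selection'' is a citation, not an argument --- the selection procedure, the estimate of the expected multiplicity in terms of $|\bigcup_{k<n}F_k^{-1}F_n|/|F_n|$, and the Calder\'on transference step each require precise statements and proofs, and the resulting weak-type constant is a function of the tempering constant $C$ rather than $C$ itself. As a self-contained proof the proposal therefore has a genuine hole at its core; as an account of how the cited theorem is actually proved, it is accurate.
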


Let $\mathcal {P}$ be a finite measurable partition of $X$. For a finite subset $F$ in $G$, we denote by $\mathcal{P}_F=\bigvee_{g\in F}g^{-1}\mathcal{P}$.
Then the classical measure-theoretical entropy of $\mathcal {P}$ is defined by
$$h_{\mu}(G,\mathcal{P})=\liminf_{n\rightarrow+\infty}\frac{1}{|F_n|}H(\mathcal{P}_{F_n}),$$
where $\{F_n\}$ is any F{\o}lner sequence in $G$ and the definition is independent of the specific F{\o}lner sequence $\{F_n\}$.
The measure-theoretical entropy of the system $(X,G,\mu)$, $h_{\mu}(X,G)$, is the supremum of $h_{\mu}(G,\mathcal{P})$ over $\mathcal {P}$.

For $x\in X$, let $\mathcal {P}(x)$ denote the atom in $\mathcal{P}$ that contains $x$. Now we recall the classical Shannon-McMillan-Breiman theorem for ergodic $G-$systems.
\begin{theorem}[Shannon-McMillan-Breiman(SMB) theorem, see \cite{L,W}]\label{th-3-2} \
Let $(X,G,\mu)$ be an ergodic $G-$system. For any tempered F{\o}lner sequence $\{F_n\}$ in $G$ with $\lim\limits_{n\rightarrow+\infty}\frac{|F_n|}{\log n}=\infty$
and any finite measurable partition $\mathcal{P}$ of $X$,
$$\lim_{n\rightarrow+\infty}-\frac{1}{|F_n|}\log \mu(\mathcal{P}_{F_n}(x))=h_{\mu}(G,\mathcal{P}),$$
for $\mu$ almost everywhere $x\in X$.
\end{theorem}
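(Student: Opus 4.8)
The plan is to fix the limiting constant by a soft subadditivity argument, reduce the pointwise statement to the ergodic theorem (Theorem \ref{th-3-1}) applied to a suitable conditional information function, and use a Borel--Cantelli argument --- where the growth hypothesis \eqref{eq-1-2} is indispensable --- to promote convergence in mean to convergence almost everywhere. Write $I_{F_n}(x)=-\log\mu(\mathcal{P}_{F_n}(x))$, so that $\int_X I_{F_n}\,d\mu=H(\mathcal{P}_{F_n})$ and the claim is $|F_n|^{-1}I_{F_n}\to h:=h_\mu(G,\mathcal{P})$ for $\mu$-a.e.\ $x$. The set function $F\mapsto H(\mathcal{P}_F)$ on finite subsets of $G$ is nonnegative, monotone, $G$-invariant and subadditive, so by the Ornstein--Weiss subadditive lemma \cite{OW} its averages converge, $|F_n|^{-1}H(\mathcal{P}_{F_n})\to\inf_F|F|^{-1}H(\mathcal{P}_F)=h$; integrating gives the mean statement $|F_n|^{-1}\int I_{F_n}\,d\mu\to h$, and the task is to upgrade it to a pointwise one.

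The core of the argument is a decomposition of $I_{F_n}$ into a Birkhoff-type sum. Using an Ornstein--Weiss quasi-tiling I would cover $F_n$, up to a set of relative size $o(1)$, by disjoint translates $\{g_j S\}_j$ of a fixed finite shape $S\ni e$ chosen with $|S|^{-1}H(\mathcal{P}_S)$ within $\eps$ of $h$. Fixing a linear order on the tiles and applying the chain rule for the information function then expresses $I_{F_n}(x)$, modulo a boundary error that the quasi-tiling keeps $o(|F_n|)$, as $\sum_j I_\mu(\mathcal{P}_S\mid\mathcal{Q})(g_j x)$ for a fixed finite ``past'' $\mathcal{Q}$; the point is that conditioning each tile only on the earlier tiles supplies the one-sided past that a general amenable group does not possess, so that $\int I_\mu(\mathcal{P}_S\mid\mathcal{Q})\,d\mu$ stays within $O(\eps)$ of $h\,|S|$. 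The pointwise ergodic theorem (Theorem \ref{th-3-1}), together with a maximal inequality of Lindenstrauss type \cite{L}, shows these averages converge $\mu$-a.e.\ to the integral, and letting $\eps\to0$ yields $|F_n|^{-1}I_{F_n}(x)\to h$. It is in the maximal inequality and in Theorem \ref{th-3-1} that the temperedness of $\{F_n\}$ is used.

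To control the upper tail cleanly and to see where \eqref{eq-1-2} enters, I would run a large-deviation estimate in tandem. For $0<\lambda<1$ a Chernoff bound on the information function gives
$$\mu\set{x:I_{F_n}(x)\ge (h+\eps)|F_n|}\le e^{-\lambda(h+\eps)|F_n|}\sum_{A\in\mathcal{P}_{F_n}}\mu(A)^{1-\lambda},$$
and a further subadditivity argument shows that $|F_n|^{-1}\log\sum_{A}\mu(A)^{1-\lambda}$ converges to a limit whose ratio to $\lambda$ tends to $h$ as $\lambda\to0^+$; hence for $\lambda$ small the right-hand side decays exponentially in $|F_n|$. The hypothesis \eqref{eq-1-2} forces $\sum_n e^{-c|F_n|}<\infty$, so Borel--Cantelli removes the exceptional sets and makes the convergence hold for $\mu$-a.e.\ $x$; the same device absorbs the exceptional sets produced by the tiling in the previous step.

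The main obstacle is the decomposition itself. Unlike the $\Z$-action case, where the increasing pasts $\bigvee_{k\ge1}T^{-k}\mathcal{P}$ furnish an exact martingale decomposition of the information, here the conditional information function and the clean Birkhoff sum must be manufactured by quasi-tiling the F{\o}lner sets in the sense of Ornstein and Weiss, and the overlap and boundary errors must be shown to be $o(|F_n|)$ uniformly enough to survive the Borel--Cantelli step. Establishing the attendant maximal inequality for a merely tempered F{\o}lner sequence, and verifying that the tiling errors do not accumulate across $n$, is the technical heart of the proof.
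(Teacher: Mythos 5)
The paper gives no proof of this statement: Theorem \ref{th-3-2} is imported verbatim from Lindenstrauss \cite{L} and Weiss \cite{W}, so your sketch can only be measured against those sources, and against them two of your steps have genuine gaps. The first is the decomposition you yourself call the core of the argument. After quasi-tiling $F_n$ by translates $g_jS$ and applying the chain rule in a chosen order, the $j$-th summand is the information of $\mathcal{P}_{g_jS}$ conditioned on $\mathcal{P}_{g_1S\cup\cdots\cup g_{j-1}S}$; viewed from the point $g_jx$ this is the information of $\mathcal{P}_S$ conditioned on $\mathcal{P}_{g_j^{-1}(g_1S\cup\cdots\cup g_{j-1}S)}$, and that conditioning set depends on $j$, on $n$ and on the tiling. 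For a general amenable group there is no fixed finite ``past'' $\mathcal{Q}$ making this a Birkhoff sum of a single $L^1$ function, so Theorem \ref{th-3-1} and a maximal inequality cannot be applied to it. You flag this as the technical heart, but flagging an obstruction is not overcoming it: this is exactly why the amenable SMB theorem is hard, and the proofs in \cite{L,W} avoid the martingale decomposition entirely, replacing it by a combinatorial count of the typical $(\mathcal{P},F_n)$-names generated by the quasi-tiling (at most $e^{(h+\delta)|F_n|}$ of them cover a set of measure near $1$), followed by the Borel--Cantelli step you describe, which is where temperedness, the ergodic theorem and \eqref{eq-1-2} actually enter.

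Second, the Chernoff estimate does not close. The inequality itself is correct, since $\E\, e^{\lambda I_{F_n}}=\sum_{A\in\mathcal{P}_{F_n}}\mu(A)^{1-\lambda}$, but $F\mapsto\log\sum_{A\in\mathcal{P}_F}\mu(A)^{1-\lambda}$, which is $\lambda$ times the R\'enyi entropy of order $1-\lambda$, is not subadditive (R\'enyi entropies of order different from $0$ and $1$ fail subadditivity and the chain rule), so the existence of your limit is unjustified. Worse, $\frac{1}{\lambda}\log\sum_A\mu(A)^{1-\lambda}=H_{1-\lambda}(\mathcal{P}_{F_n})\ge H(\mathcal{P}_{F_n})$, and the interchange of $\lambda\to0^+$ with $n\to\infty$ needed to push the rate below $h+\eps$ is precisely a large-deviation upper bound for $I_{F_n}/|F_n|$, which fails for general ergodic systems already when $G=\Z$: the tail $\mu\{I_{F_n}\ge(h+\eps)|F_n|\}$ need not decay exponentially without extra mixing hypotheses. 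So the upper-tail control must come from the name-counting argument, not from a moment bound. The soft ingredients of your sketch --- the Ornstein--Weiss convergence of $|F_n|^{-1}H(\mathcal{P}_{F_n})$ and the use of \eqref{eq-1-2} to make $\sum_n e^{-c|F_n|}$ summable for Borel--Cantelli --- are correct and do match how the paper uses \eqref{eq-1-2} elsewhere (e.g.\ in the proof of Theorem \ref{th-3-3}).
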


Now we give the proof of Brin-Katok's entropy formula for amenable group action systems.

\begin{proof}[{\bf Proof of Theorem \ref{th-3-3}}] Let $h=h_{\mu}(X,G)$. We first prove for $\mu$-a.e. $x\in X$,
$$\lim\limits_{\delta\rightarrow 0}\limsup\limits _{n\rightarrow+\infty}-\frac{1}{|F_n|}\log \mu(B_{F_n}(x,\delta))\le h_{\mu}(X,G).$$

Let $\delta>0$ be given and let $\xi$ be a finite measurable partition of $X$ such that the diameter of every set in $\xi$ is less than $\delta$.
Then by SMB theorem, for $\mu$-a.e. $x\in X$,
$$\lim_{n\rightarrow \infty}-\frac{1}{|F_n|}\log \mu(\xi_{F_n}(x))=h_{\mu}(G,\xi)\le h_{\mu}(X,G).$$
Since $\xi_{F_n}(x)\subset B_{F_n}(x,\delta)$, we have that for $\mu$-a.e. $x\in X$,
$$\limsup_{n\rightarrow+\infty}-\frac{1}{|F_n|}\log \mu(B_{F_n}(x,\delta))\le h_{\mu}(X,G)$$
for ever $\delta$.

Now we show that
$$\lim\limits_{\delta\rightarrow 0}\liminf\limits _{n\rightarrow+\infty}-\frac{1}{|F_n|}\log \mu(B_{F_n}(x,\delta))\ge h_{\mu}(X,G).$$

For any $\epsilon>0$, let $\xi$ be a finite measurable partition of $X$ such that$$h_{\mu}(G,\xi)=\tilde{h}>h-\epsilon$$ and $\mu(\partial \xi)=0$,
where $\partial \xi$ is the union of the boundaries of atoms of $\xi$. Then $\forall \epsilon_1>0$, for
sufficiently small $\delta>0$, the $\delta-$neiborghhood of $\partial \xi$ (denoted by $U_{\delta}$) can have measure less than $\epsilon_1$.
By the ergodic theorem, $\frac{1}{|F_n|}\sum\limits_{g\in F_n}\chi_{U_{\delta}}(gx)$ converges to $\mu(U_{\delta})$ a.e..
Since
\begin{align*}
&\{x\in X: \lim_{n\rightarrow+\infty}\frac{1}{|F_n|}\sum\limits_{g\in F_n}\chi_{U_{\delta}}(gx)=\mu(U_{\delta})\}\\
\subset &\{x\in X: \lim_{n\rightarrow+\infty}\frac{1}{|F_n|}\sum\limits_{g\in F_n}\chi_{U_{\delta}}(gx)<\epsilon_1\}\\
=&\bigcup_{N\in \N}\{x\in X: \forall n>N, \frac{1}{|F_n|}\sum\limits_{g\in F_n}\chi_{U_{\delta}}(gx)<\epsilon_1\}\\
\triangleq & \bigcup_{N\in \N} E_N
\end{align*}
 and $E_N$ increases, for sufficiently large $N$, whence $n>N$,
\begin{align}\label{set-1}
 \mu(\{x\in X: \forall n'\ge n, \sum\limits_{g\in F_{n'}}\chi_{U_{\delta}}(gx)<\epsilon_1|F_{n'}|\})>1-\epsilon.
\end{align}

By the SMB Theorem \ref{th-3-2}, $-\frac{1}{|F_n|}\log \mu(\xi_{F_n}(x))$ converges to $\tilde{h}$ a.e.. Hence by the same argument as above,
for sufficiently large $N$, whence $n>N$,
\begin{align}\label{set-2}
 \mu(\{x\in X: \forall n'\ge n, -\frac{1}{|F_{n'}|}\log \mu(\xi_{F_{n'}}(x))>\tilde{h}-\epsilon\})>1-\epsilon.
\end{align}

Let \begin{align}\label{set-3}
      E=&\{x\in X: \forall n'\ge n, \sum\limits_{g\in F_{n'}}\chi_{U_{\delta}}(gx)<\epsilon_1|F_{n'}|\}\\
      &\bigcap\{x\in X: \forall n'\ge n,-\frac{1}{|F_{n'}|}\log \mu(\xi_{F_{n'}}(x))> \tilde{h}-\epsilon\}. \nonumber
    \end{align}
 Then for any $n>N$, $\mu(E)>1-2\epsilon$.

Let $w_{\xi,F_n}(x)=(\xi(gx))_{g\in F_{n}}$ be the $(\xi,F_n)-$name of $x$. For any $y\in B(x,\delta)$, we have that either $\xi(x)=\xi(y)$ or $x\in U_{\delta}(\xi)$.
Hence if $x\in E$ and $y\in B_{F_n}(x,\delta)$, then the Hamming distance between $w_{\xi,F_n}(x)$ and $w_{\xi,F_n}(y)$ is less than $\epsilon_1$. This implies that
when $x\in E$, $$B_{F_n}(x,\delta)\subset \bigcup \{\xi_{F_n}(y): w_{\xi,F_n}(y) \text{ is }\epsilon_1-\text{close to }w_{\xi,F_n}(x)\text{ under Hamming metric}\}.$$

By Stirling's formula, the total number of such $(\xi,F_n)-$names, denoted by $L_n$, can be estimated by:
\begin{align*}
 L_n\le\sum_{j=0}^{\lfloor\epsilon_1|F_n|\rfloor}C_{|F_n|}^j(\#\xi-1)^j\le \exp(K\epsilon|F_n|),
\end{align*}
where $K$ can be chosen as
$$K=\frac{1}{\epsilon}\big (\epsilon_1+\epsilon_1\log(\#\xi-1)-\epsilon_1\log\epsilon_1-(1-\epsilon_1)\log(1-\epsilon_1)\big )+2.$$
For the calculation of $K$, one may refer to \cite{K} or \cite{BK}.

We now note that $K>2$ is a constant only dependent on $\#\xi,\epsilon$ and $\epsilon_1$ but independent of $x$ and $n$. Moreover, $K\epsilon$ tends to $0$ while $\epsilon$ tends to $0$ (this can be done by making $\epsilon_1$ sufficiently small for fixed $\epsilon$).

Let $$D_n=\{x\in E: \mu(B_{F_n}(x,\delta))>\exp((-\tilde{h}+3K\epsilon)|F_n|)\}.$$

If we can prove that $\sum_{n=N}^{\infty}\mu(D_n)<\infty$, then apply the Borel-Cantelli Lemma:
for a.e. $x\in E$,
$$\liminf_{n\rightarrow+\infty}-\frac{1}{|F_n|}\log \mu(B_{F_n}(x,\delta))\ge \tilde{h}-3K\epsilon\ge h-(3K+1)\epsilon.$$
Hence we can obtain
$$\lim_{\delta\rightarrow 0}\liminf_{n\rightarrow+\infty}-\frac{1}{|F_n|}\log \mu(B_{F_n}(x,\delta))\ge h_{\mu}(X,G).$$

Now we estimate the measure of $D_n$.

For any $x\in D_n$, in those $L_n-$many $(\xi,F_n)-$names which are $\epsilon_1-$close to $w_{\xi,F_n}(x)$ in Hamming distance,
there exists at least one corresponding atom of $\xi_{F_n}$ whose measure is greater than $\exp((-\tilde{h}+2K\epsilon)|F_n|)$.
The total number of such atoms will not exceed
$\exp(\tilde{h}-2K\epsilon|F_n|)$. Hence $Q_n$, the total number of elements in $\xi_{F_n}$ that intersect $D_n$, satisfies:
$$Q_n\le L_n\exp((\tilde{h}-2K\epsilon)|F_n|)\le \exp((\tilde{h}-K\epsilon)|F_n|).$$
Let $S_n$ denote the total measure of such $Q_n$ elements of $\xi_{F_n}$ whose intersections with $E$ have positive measure. Then from \eqref{set-3},
$$S_n\le Q_n\exp((-\tilde{h}+\epsilon)|F_n|)\le\exp((-K\epsilon+\epsilon)|F_n|),$$
which follows that
$$\mu(D_n)\le S_n\le \exp((-K\epsilon+\epsilon)|F_n|).$$

From the increasing condition \eqref{eq-1-2}, for sufficiently large $N$, whenever $n\ge N$, $\frac{|F_n|}{\log n}\ge \frac{2}{(K-1)\epsilon}$ holds.
Then $\exp((-K\epsilon+\epsilon)|F_n|)\le n^{-2}$ and hence $\sum_{n=N}^{\infty}\mu(D_n)\le\sum_{n=N}^{\infty}S_n<\infty$.
Thus the proof is completed.
\end{proof}

\section{A variational principle for Bowen entropy}
In this section we will show that there exists a variational principle between Bowen topological entropy and (lower) local entropy.
\begin{theorem}\label{th-4-1}
Let $(X,G)$ be a compact metric $G-$action topological dynamical system and $G$ a discrete countable amenable group. If $K\subseteq X$ is non-empty and compact
and $\{F_n\}$ a sequence of finite subsets in $G$ with the increasing condition $\lim\limits_{n\rightarrow+\infty}\frac{|F_n|}{\log n}=\infty$,
then
$$h^B_{top}(K, \{F_n\})=\sup\{\underline h_{\mu}^{loc}(\{F_n\}):\mu(K)=1\},$$
where the supremum is taken over $\mu\in M(X)$, the Borel probability measures on $X$.
\end{theorem}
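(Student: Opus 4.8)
The plan is to establish the variational principle by proving the two inequalities separately, following the classical Bowen--Pesin scheme adapted to amenable group actions. The hard analytic content is carried by the Brin--Katok formula (Theorem~\ref{th-3-3}) and the SMB theorem (Theorem~\ref{th-3-2}) already established, so the work here is to convert pointwise local-entropy estimates into cover estimates via a suitable mass-distribution argument.

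For the lower bound $h^B_{top}(K,\{F_n\})\ge\sup\{\underline h_{\mu}^{loc}(\{F_n\}):\mu(K)=1\}$, I would fix a Borel probability measure $\mu$ with $\mu(K)=1$ and show $h^B_{top}(K,\{F_n\})\ge\underline h_{\mu}^{loc}(\{F_n\})$. The engine is a Frostman-type lemma: if every point $x$ of a set $E\subseteq K$ satisfies $\underline h_{\mu}^{loc}(x,\epsilon,\{F_n\})>s$ for a fixed $\epsilon$, then for any cover of $E$ by balls $B_{F_{n_i}}(x_i,\epsilon/2)$ with $n_i\ge N$, one has $\mu(B_{F_{n_i}}(x_i,\epsilon))\le\exp(-s|F_{n_i}|)$ for large $N$, so that
$$
\sum_i\exp(-s|F_{n_i}|)\ge\sum_i\mu\big(B_{F_{n_i}}(x_i,\epsilon)\big)\ge\mu(E).
$$
This forces $\mathcal{M}(E,N,\epsilon/2,s,\{F_n\})\ge\mu(E)$ for all large $N$, hence $\mathcal{M}(E,\epsilon/2,s,\{F_n\})>0$ and $h^B_{top}(E,\{F_n\})\ge s$. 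Stratifying $K$ over the level sets $\{x:\underline h_{\mu}^{loc}(x,\{F_n\})>s\}$ and letting $s\uparrow\underline h_{\mu}^{loc}(\{F_n\})$ via the integral definition of local entropy (choosing $s$ slightly below the essential value on a positive-measure piece) yields the claim. The main subtlety is the mismatch between the $\epsilon$ in the ball used to cover and the larger $\epsilon$ controlling the measure, which is why one covers with radius $\epsilon/2$ and estimates measure at radius $\epsilon$; the triangle inequality in the metric $d_{F_n}$ makes $B_{F_{n_i}}(x_i,\epsilon/2)\subseteq B_{F_{n_i}}(y,\epsilon)$ for the distinguished center, which is what is needed.

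For the upper bound $h^B_{top}(K,\{F_n\})\le\sup\{\underline h_{\mu}^{loc}(\{F_n\})\}$, I would argue by constructing, given $s<h^B_{top}(K,\{F_n\})$, a measure $\mu$ concentrated on $K$ whose lower local entropy is at least $s$. This is where the main obstacle lies: one needs a ``dynamical'' analogue of the fact that positive Hausdorff content produces a Frostman measure. The standard route is to use that $s<h^B_{top}(K,\{F_n\})$ means $\mathcal{M}(K,\epsilon,s,\{F_n\})=+\infty$ for all small $\epsilon$, and then run a weighted-cover / min-max (the Besicovitch--type or Howroyd compactness) argument to produce, on finite approximations, probability measures $\mu_n$ satisfying a uniform bound $\mu_n(B_{F_m}(x,\epsilon))\le C\exp(-s|F_m|)$; a weak-$*$ limit point $\mu$ then gives full mass to $K$ (using compactness of $K$) and satisfies $\underline h_{\mu}^{loc}(x,\epsilon,\{F_n\})\ge s$ for $\mu$-a.e.\ $x$. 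Taking $\epsilon\to0$ and $s\uparrow h^B_{top}(K,\{F_n\})$ finishes the inequality. The delicate points are controlling the overlap of the F\o lner balls $B_{F_m}(x,\epsilon)$ uniformly in $m$ (where one exploits the structure of the cover families $\mathcal{W}_{F_n}(\mathcal{U})$ and may invoke the tempered/increasing hypotheses to bound multiplicities) and ensuring that the weak-$*$ limit does not lose mass off $K$, which is handled by the compactness of $K$ together with the outer-regularity remark $\mathcal{M}(Z,\mathcal{U},N,s,\{F_n\})=\inf\{\mathcal{M}(C,\dots):C\supseteq Z\text{ open}\}$ recorded in the introduction.
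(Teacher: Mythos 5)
Your overall scheme is the same as the paper's (both follow Feng--Huang \cite{FH}): a Frostman-type lower bound obtained by covering at radius $\epsilon/2$ while estimating measure at radius $\epsilon$, and an upper bound obtained by extracting a measure with a uniform ball estimate from positive Bowen content. The lower bound as you describe it is essentially identical to the paper's argument and is fine. One framing error before the main point: Theorem \ref{th-4-1} uses neither the Brin--Katok formula nor the SMB theorem --- the measures in the supremum are arbitrary (not invariant) Borel probability measures, and $\{F_n\}$ need not even be a F{\o}lner sequence --- so the ``hard analytic content'' is not carried by Theorems \ref{th-3-2} and \ref{th-3-3}; it is carried by the covering argument that your sketch leaves unexecuted.

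That is where the genuine gap lies. Passing from $\mathcal{M}(K,\epsilon,s,\{F_n\})=+\infty$ to a measure $\mu$ with $\mu(K)=1$ and $\mu(B_{F_n}(x,\epsilon))\le C e^{-s|F_n|}$ goes, as you suggest, through the weighted (Howroyd-type) content $\mathcal{W}$: the Frostman measure comes out of $\mathcal{W}$ essentially for free (Lemma \ref{lem-4-4}, by a Hahn--Banach/Riesz argument), but one must then show that positivity of $\mathcal{M}$ forces positivity of $\mathcal{W}$, i.e.\ Proposition \ref{prop-4-3}: $\mathcal{M}(Z,N,6\epsilon,s+\delta,\{F_n\})\le\mathcal{W}(Z,N,\epsilon,s,\{F_n\})$. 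This comparison is the crux, and it is nontrivial precisely because an admissible weighted family mixes balls $B_{F_{n_i}}(x_i,\epsilon)$ of different orders $n_i$, so no single-scale Besicovitch or $5r$ argument applies directly. The paper splits the family by order $n$, runs the $5r$-covering lemma with respect to $d_{F_n}$ at each fixed $n$ (this step needs no hypothesis on $\{F_n\}$ and is not a matter of ``bounding multiplicities'' via temperedness), and then reassembles across scales using $Z\subset\bigcup_{n\ge N}Z_{n,e^{-\delta|F_n|}t}$, which requires $\sum_{n\ge N}e^{-\delta|F_n|}<1$; this is the one place in the proof of Theorem \ref{th-4-1} where the hypothesis \eqref{eq-1-2}, $\lim_n|F_n|/\log n=\infty$, is actually needed. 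Temperedness is not used at all here. Your sketch attributes the role of the hypotheses to overlap control of the F{\o}lner balls and leaves the multi-scale reassembly unaddressed, so as written the upper bound does not go through.
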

We remark here that in this theorem, we do not need $\{F_n\}$ to be a F{\o}lner sequence.

For the proof of this theorem, we use the method from \cite{FH}, which extends the ideas and techniques from geometric measure theory (cf. \cite{F,M}) to dynamical system. 

We first introduce the so-called weighted entropy for system $(X,G)$ in the following way.
Let $\{F_n\}$ be a sequence of finite subsets in $G$ with $|F_n|$ tends to infinity.
For any function $f : X\rightarrow[0,+\infty), N \in\mathbf{N}$
and $\epsilon> 0$, define
$$\mathcal{W}(f,N,\epsilon,s,\{F_n\})=\inf \sum_{i}c_i\exp(-s|F_{n_i}|),$$
where the infimum is taken over all finite or countable families $\{(B_{F_{n_i}}(x_i,\epsilon), c_i)\}$ such
that $0 < c_i <+\infty, x_i\in X, n_i\ge N$ and $\sum_{i}c_i\chi_{B_{F_{n_i}}(x_i,\epsilon)}\ge f.$

For $Z\subset X$ and $f =\chi_Z$ we set $\mathcal{W}(Z,N,\epsilon,s,\{F_n\})=\mathcal{W}(\chi_Z,N,\epsilon,s,\{F_n\})$.
The quantity $\mathcal{W}(Z,N,\epsilon,s,\{F_n\})$ does
not decrease as $N$ increases and $\epsilon$ decreases, hence the following limits exist:
$$\mathcal{W}(Z,\epsilon,s,\{F_n\})=\lim_{N\rightarrow +\infty}\mathcal{W}(Z,N,\epsilon,s,\{F_n\}),
\mathcal{W}(Z,s,\{F_n\})=\lim_{\epsilon\rightarrow 0}\mathcal{W}(Z,\epsilon,s,\{F_n\}).$$
Clearly, there exists a critical value of the parameter $s$, which we will denote by $h^{WB}_{top}(Z,\{F_n\})$, where $\mathcal{W}(Z,s,\{F_n\})$ jumps
from $+\infty$ to $0$, i.e.
\begin{equation*}
\mathcal{W}(Z,s,\{F_n\})=\begin{cases}
0, s > h^{WB}_{top}(Z,\{F_n\}),\\
+\infty, s < h^{WB}_{top}(Z,\{F_n\}).
\end{cases}
\end{equation*}
We call $h^{WB}_{top}(Z,\{F_n\})$ the weighted Bowen topological entropy along $\{F_n\}$ restricted to Z or,
simply, the weighted Bowen topological entropy of Z along $\{F_n\}$.

Now we will consider the relation between the Bowen topological entropy and the weighted Bowen topological entropy.
It is clear that if we take $f=\chi_Z$ and $c_i=1$, then the following holds.
\begin{proposition}\label{prop-4-2}
$\mathcal{W}(Z,N,\epsilon,s,\{F_n\})\le \mathcal{M}(Z,N,\epsilon,s,\{F_n\})$, for any $s,\epsilon>0$ and $N\in\mathbf{N}$.
\end{proposition}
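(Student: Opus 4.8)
The plan is to exhibit every competitor in the infimum defining $\mathcal{M}(Z,N,\epsilon,s,\{F_n\})$ as a particular competitor in the infimum defining $\mathcal{W}(Z,N,\epsilon,s,\{F_n\})$ carrying the same cost, so that the weighted infimum, being taken over a strictly larger admissible family, can only be smaller. Concretely, I would first take an arbitrary admissible family $\{B_{F_{n_i}}(x_i,\epsilon)\}$ for $\mathcal{M}$, i.e. a finite or countable collection of balls with $x_i\in X$, $n_i\ge N$, and $\bigcup_i B_{F_{n_i}}(x_i,\epsilon)\supseteq Z$, and associate to it the weighted family $\{(B_{F_{n_i}}(x_i,\epsilon),c_i)\}$ with $c_i=1$ for every $i$, exactly as suggested before the statement.

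The substantive step is to verify that this weighted family is admissible for $\mathcal{W}(\chi_Z,N,\epsilon,s,\{F_n\})$, that is, that $\sum_i c_i\chi_{B_{F_{n_i}}(x_i,\epsilon)}\ge \chi_Z$. For any $x\in Z$ the covering condition yields at least one index $i$ with $x\in B_{F_{n_i}}(x_i,\epsilon)$, whence $\sum_i \chi_{B_{F_{n_i}}(x_i,\epsilon)}(x)\ge 1=\chi_Z(x)$; for $x\notin Z$ the inequality is automatic because $\chi_Z(x)=0$ and each summand is nonnegative. Thus the defining constraint of $\mathcal{W}$ is met, and since every $c_i=1$ the associated cost $\sum_i c_i\exp(-s|F_{n_i}|)$ equals $\sum_i \exp(-s|F_{n_i}|)$, the cost of the original family for $\mathcal{M}$.

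Finally I would pass to infima. The weighted quantity $\mathcal{W}(Z,N,\epsilon,s,\{F_n\})$ is an infimum over a class of admissible families that contains, via the embedding above, every family competing in $\mathcal{M}(Z,N,\epsilon,s,\{F_n\})$ (and in general many more, since the weights $c_i$ are allowed to range over all of $(0,+\infty)$). Since an infimum over a larger set is no larger, this gives $\mathcal{W}(Z,N,\epsilon,s,\{F_n\})\le\mathcal{M}(Z,N,\epsilon,s,\{F_n\})$ for all $s,\epsilon>0$ and $N\in\mathbf{N}$, as claimed. I do not anticipate any genuine obstacle here: the only point requiring care is the covering-to-weighting inequality, checked separately for $x\in Z$ and $x\notin Z$, which is the single essential line of the argument.
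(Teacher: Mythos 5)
Your argument is correct and is exactly the paper's (unwritten) proof: the paper merely remarks that taking $f=\chi_Z$ and $c_i=1$ turns any admissible cover for $\mathcal{M}$ into an admissible weighted family for $\mathcal{W}$ of the same cost, and you have simply spelled out the verification that $\sum_i\chi_{B_{F_{n_i}}(x_i,\epsilon)}\ge\chi_Z$ together with the monotonicity of the infimum. No gaps; nothing further is needed.
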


By alternating some parameters, we can get
\begin{proposition}\label{prop-4-3}
$\mathcal{M}(Z,N,6\epsilon,s+\delta,\{F_n\})\le \mathcal{W}(Z,N,\epsilon,s,\{F_n\})$, for any $s,\epsilon,\delta>0$ and sufficient large $N\in\mathbf{N}$.
\end{proposition}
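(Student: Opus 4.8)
The plan is to start from an arbitrary admissible weighted family and manufacture from it an ordinary cover of $Z$ by $6\epsilon$-Bowen balls whose cost, measured with the shifted exponent $s+\delta$, is no larger than the weighted cost. If $\mathcal{W}(Z,N,\epsilon,s,\{F_n\})=+\infty$ there is nothing to prove, so I would fix a family $\{(B_{F_{n_i}}(x_i,\epsilon),c_i)\}_i$ with $n_i\ge N$, $\sum_i c_i\chi_{B_{F_{n_i}}(x_i,\epsilon)}\ge\chi_Z$, and total weight $\sum_i c_ie^{-s|F_{n_i}|}$ arbitrarily close to $\mathcal{W}(Z,N,\epsilon,s,\{F_n\})$. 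After a routine reduction I would assume the family is finite and that $0<c_i\le 1$ for every $i$.

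First I would separate the two sources of inhomogeneity in the family, the scale $n_i$ and the size of the weight $c_i$. For fixed $n\ge N$ and $k\ge 1$ set $I_{n,k}=\{i: n_i=n,\ 2^{-k}<c_i\le 2^{-k+1}\}$, so that inside each group the balls all live at one scale and carry comparable weights. Choosing positive numbers $b_{n,k}$ with $\sum_{n,k}b_{n,k}<1$ and putting $Z_{n,k}=\{x\in Z: \sum_{i\in I_{n,k}}c_i\chi_{B_{F_n}(x_i,\epsilon)}(x)\ge b_{n,k}\}$, the covering inequality $\sum_i c_i\chi\ge\chi_Z$ forces $\bigcup_{n,k}Z_{n,k}=Z$, and each $Z_{n,k}$ is covered by the scale-$n$ balls of its own group.

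Second, on each $Z_{n,k}$ I would invoke the $5r$-covering lemma to pass to a disjoint subfamily. Since all balls of $I_{n,k}$ have the same radius $\epsilon$ in the pseudometric $d_{F_n}$, a maximal pairwise disjoint subfamily $\{B_{F_n}(x_j,\epsilon)\}_{j\in J_{n,k}}$, $J_{n,k}\subseteq I_{n,k}$, has the property that the enlargements $B_{F_n}(x_j,3\epsilon)\subseteq B_{F_n}(x_j,6\epsilon)$ already cover $Z_{n,k}$, by the triangle inequality for $d_{F_n}$; this yields an honest $6\epsilon$-cover of $Z$. The point of routing through the weighted quantity is that the number of selected balls is now controlled \emph{without} any packing or doubling hypothesis on $X$: because $J_{n,k}\subseteq I_{n,k}$ and every selected weight satisfies $c_j>2^{-k}$, one gets $\sum_{j\in J_{n,k}}e^{-(s+\delta)|F_n|}\le 2^k e^{-\delta|F_n|}\sum_{i\in I_{n,k}}c_ie^{-s|F_n|}$.

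Finally I would sum over $(n,k)$ and bring in the growth condition. The nuisance factor is $2^ke^{-\delta|F_n|}$: the covering step only lower-bounds the selected weights by their dyadic level, so the gain $e^{-\delta|F_n|}$ coming from the shifted exponent must be spent to defeat $2^k$. This is exactly where $\lim_n |F_n|/\log n=\infty$ enters: for $N$ large it gives $e^{-\delta|F_n|}\le n^{-2}$ once $n\ge N$, which both makes the sum over scales converge and confines the effective weight-levels to the range $k\lesssim |F_n|$ on which $2^ke^{-\delta|F_n|}$ stays bounded, the complementary tiny-weight levels being absorbed since their total weighted cost is geometrically small. With the thresholds $b_{n,k}$ chosen appropriately and $N$ large, the accumulated constants are swallowed by $e^{-\delta|F_n|}$, so the cost of the constructed $6\epsilon$-cover is at most $\sum_i c_ie^{-s|F_{n_i}|}$; taking the infimum over admissible families then gives $\mathcal{M}(Z,N,6\epsilon,s+\delta,\{F_n\})\le \mathcal{W}(Z,N,\epsilon,s,\{F_n\})$ for all sufficiently large $N$. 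I expect this last bookkeeping, balancing the dyadic weight-level sum against the sum over scales, to be the main obstacle; the non-doubling of $X$ is the reason the weighted detour is needed in the first place, but it is neutralized cleanly by the $5r$-lemma together with the trivial bound $J_{n,k}\subseteq I_{n,k}$.
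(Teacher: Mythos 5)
Your overall architecture---group the balls by scale $n$, apply a $5r$-covering lemma to extract a disjoint subfamily whose $6\epsilon$-enlargements cover, and spend the gain $e^{-\delta|F_n|}$ from the exponent shift against the sum over scales, which is where \eqref{eq-1-2} enters---matches the paper's. But the central counting step has a genuine gap. Within a dyadic level $I_{n,k}$ you bound the selected subfamily only by $\#(J_{n,k})\le 2^k\sum_{i\in I_{n,k}}c_i$, i.e.\ you never exploit the covering multiplicity: a point of $Z_{n,k}$ may be covered by very many balls of $I_{n,k}$, yet a single maximal disjoint subfamily is charged the entire weight of the level divided by the smallest weight in it. The resulting factor $2^ke^{-\delta|F_n|}$ is unbounded in $k$ for fixed $n$, and your claim that the large-$k$ levels are ``absorbed since their total weighted cost is geometrically small'' is unjustified: $a_{n,k}:=\sum_{i\in I_{n,k}}c_ie^{-s|F_n|}$ is summable in $k$ when the total cost is finite, but need not decay geometrically (e.g.\ $4^k$ balls of weight $2^{-k}$ at level $k$), so $\sum_k 2^k e^{-\delta|F_n|}a_{n,k}$ can diverge even though $\sum_k a_{n,k}$ is small. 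Separately, the preliminary ``routine reduction'' to a finite family is not available for an arbitrary $Z\subset X$ (this is exactly why the paper works with the truncations $\mathcal{I}_{n,k}$ and a Hausdorff-limit compactness argument), though for your scheme finiteness is not actually needed.

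The missing idea is the iterated $5r$-covering construction of Feng and Huang, which the paper follows. On the superlevel set $Z_{n,t}=\{x\in Z:\sum_{i\in\mathcal{I}_n}c_i\chi_{B_i}(x)>t\}$ one reduces to integer weights and extracts not one but $m=\lceil t\rceil$ subfamilies $\mathcal{B}_1,\dots,\mathcal{B}_m$, each consisting of pairwise disjoint balls and each covering $Z_{n,t}$ after $5\epsilon$-enlargement; a ball $B_i$ may be reused, but at most $c_i$ times in total, so $\sum_{j=1}^m\#(\mathcal{B}_j)\le\sum_{i}c_i$ and the cheapest subfamily costs at most $\frac1t\sum_{i\in\mathcal{I}_n}c_ie^{-s|F_n|}$---a loss of only $\frac1t$ instead of $2^k$. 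Taking $t=e^{-\delta|F_n|}t_0$ with $t_0\in(0,1)$, the condition $\sum_{n\ge N}e^{-\delta|F_n|}<1$ supplied by \eqref{eq-1-2} guarantees $Z\subset\bigcup_{n\ge N} Z_{n,e^{-\delta|F_n|}t_0}$, and the factor $e^{-\delta|F_n|}$ gained from replacing $s$ by $s+\delta$ exactly cancels the $\frac{1}{e^{-\delta|F_n|}t_0}$ loss, giving the bound $\frac{1}{t_0}\sum_ic_ie^{-s|F_{n_i}|}$. Replacing your dyadic decomposition by this multiplicity-exploiting selection is what closes the bookkeeping you correctly flag as the main obstacle.
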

The proof is similar to Proposition 3.2 of \cite{FH}.
But we should remark here that condition \eqref{eq-1-2} ensures the existence of the parameter $N$.

\begin{proof}[Proof of Proposition \ref{prop-4-3}.]

From the increasing condition \eqref{eq-1-2}, there exists $N>2$, such that for $n\ge N$, $\frac{|F_n|}{\ln n}\ge 2\delta^{-1}$.
Then $e^{-\delta |F_n|}\le n^{-2}$ and hence $\sum_{n=N}^{\infty}e^{-\delta |F_n|}<1$.

Let $\{(B_{F_{n_i}}(x_i,\epsilon), c_i)\}_{i\in \mathcal{I}}$ be a countable family such that $\mathcal{I}\subset \mathbf{N}, x_i\in X, 0 < c_i <+\infty, n_i\ge N$ and
$$\sum_{i\in\mathcal{I}}c_i\chi_{B_i}\ge \chi_Z,$$ where $B_i := B_{F_{n_i}}(x_i, \epsilon)$. Then we will show that
$$\mathcal{M}(Z,N,6\epsilon,s+\delta,\{F_n\})\le \sum_{i\in\mathcal{I}}c_i\exp(-s|F_{n_i}|),$$
and hence $$\mathcal{M}(Z,N,6\epsilon,s+\delta,\{F_n\})\le \mathcal{W}(Z,N,\epsilon,s,\{F_n\}).$$

Decompose $\mathcal{I}$ into subsets $\mathcal{I}_n := \{i\in\mathcal{I} : n_i = n\}$ and
the finite subsets $\mathcal{I}_{n,k} = \{i\in\mathcal{I}_n : i\le k\}$ for $n\ge N$ and $k\in\mathbf{N}$.
Write for brevity $B_i := B_{F_{n_i}}(x_i,\epsilon)$ and $5B_i := B_{F_{n_i}}(x_i,5\epsilon)$ for $i\in\mathcal{I}$. We
may assume $B_i$'s are mutually different. For $t > 0$, set
$$Z_{n,t} =\{x\in Z : \sum_{i\in\mathcal{I}_n}c_i\chi_{B_i}(x)>t\}$$
and $$Z_{n,k,t} =\{x\in Z : \sum_{i\in\mathcal{I}_{n,k}}c_i\chi_{B_i}(x)>t\}.$$

For $Z_{n,k,t}$, we may assume that each $c_i$ is a positive integer. Since $\mathcal{I}_{n,k}$ is finite and by approximating the $c_i$'s from above, we may first assume
$c_i$'s are positive rational numbers. Also notice that $Z_{n,k,dt}$ for $dc_i$'s is equal to $Z_{n,k,t}$ for $c_i$'s, so multiplying with a common denominator $d$, we may
assume that each $c_i$ is a positive integer. Let $m$ be the least integer with $m\ge t$.
Denote $\mathcal{B} = \{B_i, i\in\mathcal{I}_{n,k}\}$ and define $u : \mathcal{B}\rightarrow Z$ by $u(B_i) = c_i$. We define by
induction integer-valued functions $v_0, v_1, \cdots , v_m$ on $\mathcal{B}$ and sub-families $\mathcal{B}_1, \cdots, \mathcal{B}_m$ of
$\mathcal{B}$ starting with $v_0 = u$. Using the classical 5r-coving Lemma in fractal geometry (see, for example, Theorem 2.1 of \cite{M}), taking the metric $d_{F_n}$ instead
of $d$, there exists a pairwise disjoint subfamily $\mathcal{B}_1$ of $\mathcal{B}$ such that $\bigcup_{B\in\mathcal{B}}B\subset \bigcup_{B\in\mathcal{B}_1}5B$,
and hence $Z_{n,k,t}\subset \bigcup_{B\in\mathcal{B}_1} 5B$. Repeating this process, we can define
inductively for $j = 1, \cdots ,m$, disjoint subfamilies $\mathcal{B}_j$ of $\mathcal{B}$ such that
$$\mathcal{B}_j\subset\{B\in\mathcal{B} : v_{j-1}(B)\ge 1\}, Z_{n,k,t}\subset \bigcup_{B\in\mathcal{B}_j}5B$$
and the functions $v_j$ such that
\begin{equation*}
v_j(B) =\begin{cases}v_{j-1}(B)-1 \text{ for } B\in\mathcal{B}_j ,\\
                     v_{j-1}(B) \text{ for } B\in\mathcal{B}\setminus \mathcal{B}_j .
\end{cases}
\end{equation*}
Since for $j < m, Z_{n,k,t}\subset\{x :\sum_{B\in\mathcal{B}: B\ni x} v_j(B)\ge m-j\}$, whenever
every $x\in Z_{n,k,t}$ belongs to some ball $B\in\mathcal{B}$ with $v_j(B)\ge1$, the above inductive process works. Thus
\begin{align*}
\sum_{j=1}^m\#(\mathcal{B}_j)e^{-s|F_n|} &=\sum_{j=1}^m\sum_{B\in\mathcal{B}_j}(v_{j-1}(B)-v_j(B))e^{-s|F_n|}\\
&\le \sum_{B\in\mathcal{B}}\sum_{j=1}^m(v_{j-1}(B)-v_j(B))e^{-s|F_n|}\\
&\le\sum_{B\in\mathcal{B}}u(B)e^{-s|F_n|}=\sum_{i\in\mathcal{I}_{n,k}}c_ie^{-s|F_n|}.
\end{align*}
Choose $j_0\in\{1, \cdots ,m\}$ such that $\#(\mathcal{B}_{j_0})$ is the smallest. Then
$$\#(\mathcal{B}_{j_0})e^{-s|F_n|}\le \frac{1}{m}\sum_{i\in\mathcal{I}_{n,k}}c_ie^{-s|F_n|}\le\frac{1}{t}\sum_{i\in\mathcal{I}_{n,k}}c_ie^{-s|F_n|}.$$

This shows that: for each $n\ge N, k\in\mathbf{N}$ and $t > 0$, there exists a finite set $\mathcal{J}_{n,k,t}\subset \mathcal{I}_{n,k}$
such that the balls $B_i$ ($i\in \mathcal{J}_{n,k,t}$) are pairwise disjoint, $Z_{n,k,t}\subset \bigcup_{i\in\mathcal{J}_{n,k,t}}5B_i$
and $$\#(\mathcal{J}_{n,k,t})e^{-s|F_n|}\le\frac{1}{t}\sum_{i\in\mathcal{I}_{n,k}}c_ie^{-s|F_n|}.$$

Now assume $Z_{n,t}\neq\emptyset$. Since $Z_{n,k,t}\uparrow Z_{n,t}$,
$Z_{n,k,t}\neq\emptyset$ when $k$ is large enough. Let $\mathcal{J}_{n,k,t}$ be the sets constructed above. Then
$\mathcal{J}_{n,k,t}\neq\emptyset$ when $k$ is large enough. Define $E_{n,k,t} = \{x_i : i\in\mathcal{J}_{n,k,t}\}$. Note that
the family of all non-empty compact subsets of $X$ is compact with respect to the
Hausdorff distance. It follows that there is a subsequence
$\{k_j\}$ of natural numbers and a non-empty compact set $E_{n,t}\subset X$ such that $E_{n,k_j ,t}$
converges to $E_{n,t}$ in the Hausdorff distance as $j\rightarrow+\infty$. Since any two points in $E_{n,k,t}$
have a distance (with respect to $d_{F_n}$) no less than $\epsilon$, so do the points in $E_{n,t}$. Thus
$E_{n,t}$ is a finite set, moreover, $\#(E_{n,k_j ,t}) = \#(E_{n,t})$ when $j$ is large enough. Hence
$$\bigcup_{x\in E_{n,t}}B_{F_n}(x, 5.5\epsilon)\supset\bigcup_{x\in E_{n,k_j,t}}B_{F_n}(x, 5\epsilon)=\bigcup_{i\in \mathcal{J}_{n,k_j,t}}5B_i\supset Z_{n,k_j,t}$$
when j is sufficiently large, and thus
$\bigcup_{x\in E_{n,t}}B_{F_n}(x, 6\epsilon)\supset Z_{n,t}$. Since
$\#(E_{n,k_j ,t}) = \#(E_{n,t})$ when $j$ is large enough, we have $\#(E_{n,t})e^{-s|F_n|}\le\frac{1}{t}\sum_{i\in\mathcal{I}_{n}}c_ie^{-s|F_n|}$.
This forces
\begin{align*}
\mathcal{M}(Z_{n,t},N,6\epsilon,s+\delta,\{F_n\})&\le \#(E_{n,t})e^{-(s+\delta)|F_n|}\\ &\le\frac{1}{e^{\delta |F_n|}t}\sum_{i\in\mathcal{I}_{n}}c_ie^{-s|F_n|}.
\end{align*}

Since $\sum_{n=N}^{\infty}e^{-\delta |F_n|}<1$, we can deduce that
$Z\subset\bigcup_{n=N}^{\infty} Z_{n,e^{-\delta |F_n|}t}$ for any $t\in (0,1)$. And also
note that $\mathcal{M}(Z,N,\epsilon,s,\{F_n\})$ is an outer measure of $X$, we have
\begin{align*}\mathcal{M}(Z,N,6\epsilon,s+\delta,\{F_n\})&\le\sum_{n=N}^{\infty}\mathcal{M}(Z_{n,e^{-\delta |F_n|}t},N,6\epsilon,s+\delta,\{F_n\})\\
&\le\sum_{n=N}^{\infty}\frac{1}{t}\sum_{i\in\mathcal{I}_{n}}c_ie^{-s|F_n|}\\ &=\frac{1}{t}\sum_{i\in\mathcal{I}}c_ie^{-s|F_{n_i}|}.
\end{align*}
Hence
$$\mathcal{M}(Z,N,6\epsilon,s+\delta,\{F_n\})\le \sum_{i\in\mathcal{I}}c_ie^{-s|F_{n_i}|}.$$
This finishes the proof of the proposition.
\end{proof}

The following is a dynamical Frostman's lemma related to the weighted Bowen entropy in the amenable group action case.

\begin{lemma}\label{lem-4-4}
Let $K$ be a non-empty compact subset of $X$. Let $s\ge0, N\in\mathbf{N}$ and
$\epsilon> 0$. Suppose that $c :=\mathcal{W}(K,N,\epsilon,s,\{F_n\})> 0$. Then there is a Borel probability measure $\mu$
on $X$ such that $\mu(K) = 1$ and
$$\mu(B_{F_n}(x,\epsilon))\le\frac{1}{c}e^{-s|F_n|}, \forall x\in X, n\ge N.$$
\end{lemma}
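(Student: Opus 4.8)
The plan is to realize the weighted content as a sublinear functional on $C(X)$ and extract the desired measure via the Hahn--Banach and Riesz representation theorems; this is the amenable-group analogue of the Feng--Huang argument. Throughout write $W(f) := \mathcal{W}(f,N,\epsilon,s,\{F_n\})$, which by the paper's definition makes sense for every (not necessarily continuous) function $f : X \to [0,+\infty)$. The basic properties I would record first are that $W$ is monotone ($f \le g \Rightarrow W(f) \le W(g)$, since any admissible family for $g$ covers $f$), positively homogeneous ($W(tf) = tW(f)$ for $t>0$, by rescaling the weights $c_i$), and subadditive ($W(f+g) \le W(f)+W(g)$, by concatenating admissible families). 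Finiteness comes from compactness: for fixed $N$ the open balls $\{B_{F_N}(x,\epsilon)\}_{x\in X}$ cover $X$, so a finite subcover with all weights $1$ shows $W(\chi_X)<\infty$, and monotonicity gives $c=W(\chi_K)\le W(\chi_X)<\infty$.

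The crucial device --- and the step I expect to be the main obstacle --- is to force the resulting measure to be carried by $K$ rather than merely finite; a naive functional built from $W(\max(f,0))$ fails to concentrate on $K$. I would instead truncate to $K$ and set, for $f\in C(X)$,
$$p(f)=W\big(\max(f,0)\,\chi_K\big).$$
Using $\max(f+g,0)\le \max(f,0)+\max(g,0)$ together with monotonicity, subadditivity and homogeneity of $W$, one checks $p$ is sublinear, and moreover $p(f)\le \|f\|_\infty\,W(\chi_K)=c\|f\|_\infty$. By Hahn--Banach, prescribing $\ell(t\cdot 1)=tc$ on $\mathbb{R}\cdot 1$ (admissible since $p(t\cdot 1)=tc$ for $t\ge 0$ and $p(t\cdot 1)=0\ge tc$ for $t<0$), I obtain a linear $L:C(X)\to\mathbb{R}$ with $L\le p$ and $L(1)=c$. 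Positivity ($f\ge 0 \Rightarrow L(-f)\le p(-f)=W(0)=0$) and the bound $|L(f)|\le c\|f\|_\infty$ make $L$ a bounded positive functional, so Riesz yields a finite Borel measure $\mu\ge 0$ with $\int f\,d\mu=L(f)$ and total mass $\mu(X)=L(1)=c$.

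It remains to verify the two defining properties, after normalizing $\tilde\mu:=c^{-1}\mu$. For the Frostman bound, fix $B=B_{F_n}(x,\epsilon)$ with $n\ge N$ (an open set, since $G$ acts by homeomorphisms). For every $f\in C(X)$ with $0\le f\le 1$ and $\mathrm{supp}\,f\subset B$ we have $f\chi_K\le\chi_B$, hence $L(f)\le p(f)=W(f\chi_K)\le W(\chi_B)\le e^{-s|F_n|}$, the last step using the single family $\{(B,1)\}$; taking the supremum over such $f$ and using inner regularity of the open set $B$ gives $\mu(B)\le e^{-s|F_n|}$, i.e. $\tilde\mu(B)\le\frac1c e^{-s|F_n|}$. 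For concentration on $K$, take any $g\in C(X)$ with $\chi_K\le g\le 1$; then $1-g\le 0$ on $K$, so $\max(1-g,0)\chi_K\equiv 0$ and $p(1-g)=W(0)=0$, whence $L(1-g)\le 0$ and $L(g)\ge L(1)=c$. By outer regularity, $\mu(K)=\inf\{L(g):g\in C(X),\ \chi_K\le g\le 1\}\ge c=\mu(X)$, forcing $\mu(X\setminus K)=0$ and $\mu(K)=c$. Thus $\tilde\mu$ is a Borel probability measure with $\tilde\mu(K)=1$ satisfying the required estimate. The delicate points to handle carefully are the two regularity identities (that $\mu(K)$ and $\mu(B)$ equal the relevant inf/sup of $L$ over continuous test functions) and the verification that the $\chi_K$-truncation preserves sublinearity of $p$; note that the increasing condition on $\{F_n\}$ is not needed here.
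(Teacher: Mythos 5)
Your proof is correct and is essentially the argument the paper intends: the paper omits the proof and defers to Lemma 3.4 of Feng--Huang, which is exactly this dynamical Frostman construction via the sublinear functional $p(f)=\mathcal{W}\bigl(\max(f,0)\chi_K,N,\epsilon,s,\{F_n\}\bigr)$, Hahn--Banach, and the Riesz representation theorem. Your handling of the two regularity steps and the truncation to $K$ matches that template, so there is nothing further to add.
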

\begin{proof}
For the proof one may refer to Lemma 3.4 of \cite{FH} and we omit it here.
\end{proof}

\begin{proof}[\bf Proof of Theorem \ref{th-4-1}]
For any $\mu\in M(X)$ with $\mu(K) = 1$, $x\in X,n\in\mathbf{N}$ and $\epsilon>0$, recall that
$$\underline h_{\mu}^{loc}(x,\epsilon,\{F_n\})=\liminf_{n\rightarrow +\infty}-\frac{1}{|F_n|}\log \mu(B_{F_n}(x,\epsilon)).$$
Since $\underline h_{\mu}^{loc}(x,\epsilon,\{F_n\})$ is nonnegative and increases as $\epsilon$ decreases,
by the monotone convergence theorem,
$$\lim_{\epsilon\rightarrow 0}\int_X \underline h_{\mu}^{loc}(x,\epsilon,\{F_n\})d\mu=\int_X\underline h_{\mu}^{loc}(x,\{F_n\})d\mu=\underline h_{\mu}^{loc}(\{F_n\}).$$

Fix $\epsilon > 0$ and $\ell \in\mathbf{N}$. Denote $u_{\ell} = \min\{\ell,\int_X \underline h_{\mu}^{loc}(x,\epsilon,\{F_n\})d\mu-\frac{1}{\ell}\}$.
Then there exist a Borel set $A_{\ell}\subset X$ with $\mu(A_{\ell}) > 0$ and $N\in\mathbf{N}$ such that
$$\mu(B_{F_n}(x,\epsilon))\le e^{-u_{\ell}|F_n|}, \forall x\in A_{\ell}, n\ge \mathbf{N}.$$
Now let $\{B_{F_{n_i}}(x_i, \epsilon/2)\}$ be a countable or finite family such that $x_i\in X, n_i\ge N$ and
$\bigcup_i B_{F_{n_i}}(x_i,\epsilon/2)\supset K\cap A_{\ell}$. We may assume that for each $i, B_{F_{n_i}}(x_i, \epsilon/2)\cap K\cap A_{\ell}\neq\emptyset$,
and choose $y_i\in B_{F_{n_i}}(x_i, \epsilon/2)\cap K\cap A_{\ell}$. Then we have
\begin{align*}
\sum_{i}e^{-u_{\ell}|F_{n_i}|}&\ge \sum_{i}\mu(B_{F_{n_i}}(y_i,\epsilon))\ge \sum_{i}\mu(B_{F_{n_i}}(x_i,\epsilon/2))
\ge\mu(K\cap A_{\ell})=\mu(A_{\ell}) > 0
\end{align*}
and it follows that $$\mathcal{M}(K,u_{\ell},\{F_n\})\ge\mathcal{M}(K,u_{\ell},N,\epsilon/2,\{F_n\})\ge\mathcal{M}(K\cap A_{\ell},u_{\ell},N,\epsilon/2,\{F_n\})\ge\mu(A_{\ell}).$$
Therefore $h^B_{top}(K,\{F_n\})\ge u_{\ell}$.

Letting $\ell\rightarrow+\infty$, we have the inequality
$$h^B_{top}(K,\{F_n\})\ge\int_X\underline h_{\mu}^{loc}(x,\epsilon,\{F_n\})d\mu.$$
Hence $h^B_{top}(K,\{F_n\})\ge\underline h_{\mu}^{loc}(\{F_n\})$.

Now we will show that $$h^B_{top}(K,\{F_n\})\le\sup\{\underline h_{\mu}^{loc}(\{F_n\}):\mu\in M(X),\mu(K)=1\}.$$
We can assume that $h^B_{top}(K,\{F_n\}) > 0$. By Proposition \ref{prop-4-2} and \ref{prop-4-3}, $h^{WB}_{top}(K,\{F_n\})= h^B_{top}(K,\{F_n\})$.
Let $0 < s < h^B_{top}(K,\{F_n\})$. Then there exist $\epsilon> 0$ and $N\in\mathbf{N}$ such that $c := \mathcal{W}(K,N,\epsilon,s,\{F_n\}) > 1$.
By Lemma \ref{lem-4-4}, there exists $\mu\in M(X)$ with $\mu(K) = 1$ such that $\mu(B_{F_n}(x,\epsilon))\le\frac{1}{c}e^{-s|F_n|}$
for any $x\in X$ and $n\ge N$. Clearly
$\underline h_{\mu}^{loc}(x,\{F_n\})\ge\underline h_{\mu}^{loc}(x,\epsilon,\{F_n\})\ge s$ for each $x\in X$
and hence $\underline h_{\mu}^{loc}(\{F_n\})=\int_X \underline h_{\mu}^{loc}(x,\{F_n\})d\mu(x)\ge s$.
This finishes the proof of the theorem.
\end{proof}

\section{Proof of the main resultes}
Now we give the proof of Theorem \ref{th-1-1}.

We first prove that for any F{\o}lner sequence $\{F_n\}$ and any open cover $\mathcal {U}$,
$$h_{top}^B(\mathcal {U}, X, \{F_n\})\le h_{top}(G,\mathcal{U}).$$

Let $\mathcal {V}$ be a subcover of $\mathcal {U}_{F_n}$ with minimal cardinality. We can write $\mathcal {V}=\{X(\mathbf{U}): \mathbf{U}\in \Lambda\}$, where $\Lambda\subset W_{F_n}(\mathcal{U})$. Then the cardinality of $\Lambda$ equals to $N(\mathcal {U}_{F_n})$. Hence
$$\sum\limits_{\mathbf{U}\in \Lambda}\exp(-s m(\mathbf{U}))=N(\mathcal {U}_{F_n})e^{-s|F_n|},$$
which implies that
$$\mathcal{M}(X,\mathcal{U},s,\{F_n\})\le \exp\big((-s+\frac{1}{|F_n|}\log N(\mathcal {U}_{F_n}))|F_n|\big).$$
If $s$ is larger than $h_{top}(G,\mathcal{U})$, $\mathcal{M}(X,\mathcal{U},s,\{F_n\})=0$. So
$h_{top}^B(\mathcal {U}, X, \{F_n\})\le h_{top}(G,\mathcal{U})$.

In the following we show that $h_{top}^B(X, \{F_n\})\ge h_{top}(X, G)$ for tempered F{\o}lner sequence $\{F_n\}$ with
$\lim\limits_{n\rightarrow+\infty}\frac{|F_n|}{\log n}=\infty$. For the proof we need the following classical variational principle for amenable group action dynamical systems, see \cite{OP,ST}.
\begin{theorem}[Variational principle for topological entropy]\label{th-5-1}
$$
  h_{top}(X, G)=\sup\limits_{\mu\in M(X,G)}h_{\mu}(X,G)=\sup\limits_{\mu\in E(X,G)}h_{\mu}(X,G),
$$
where $M(X,G)$ and $E(X,G)$ are the collection of $G-$invariant and $G-$ergodic Borel probability measures of $X$ respectively.
\end{theorem}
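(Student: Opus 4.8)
The plan is to establish three facts that together yield the theorem: (i) $h_\mu(X,G)\le h_{top}(X,G)$ for every $\mu\in M(X,G)$; (ii) $h_{top}(X,G)\le\sup_{\mu\in M(X,G)}h_\mu(X,G)$; and (iii) $\sup_{\mu\in M(X,G)}h_\mu(X,G)=\sup_{\mu\in E(X,G)}h_\mu(X,G)$. For the easy direction (i) I would fix $\mu\in M(X,G)$ and a finite Borel partition $\mathcal{P}=\{P_1,\dots,P_k\}$ and bound $h_\mu(G,\mathcal{P})$. Using regularity of $\mu$, approximate $\mathcal{P}$ from inside by a partition $\mathcal{Q}=\{Q_0,Q_1,\dots,Q_k\}$ with $Q_i\subset P_i$ compact ($i\ge1$), $Q_0$ the open remainder, and $H_\mu(\mathcal{P}\mid\mathcal{Q})$ arbitrarily small. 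Subadditivity of conditional entropy and the $G$-invariance of $\mu$ give $H_\mu(\mathcal{P}_{F_n}\mid\mathcal{Q}_{F_n})\le|F_n|\,H_\mu(\mathcal{P}\mid\mathcal{Q})$, so it remains to control $|F_n|^{-1}H_\mu(\mathcal{Q}_{F_n})$. Choosing disjoint open sets $V_i\supset Q_i$ and the open cover $\mathcal{U}=\{V_1\cup Q_0,\dots,V_k\cup Q_0\}$, every atom of $\mathcal{Q}_{F_n}$ whose $(\mathcal{Q},F_n)$-name uses the symbol $0$ at few coordinates injects, up to the Stirling factor $\exp(K\epsilon|F_n|)$ already computed in the proof of Theorem \ref{th-3-3}, into $\mathcal{U}_{F_n}$; the remaining atoms carry little mass because $\sum_{g\in F_n}\mu(g^{-1}Q_0)=|F_n|\mu(Q_0)$ is small. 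This yields $|F_n|^{-1}H_\mu(\mathcal{Q}_{F_n})\le|F_n|^{-1}\log N(\mathcal{U}_{F_n})+o(1)$, whence $h_\mu(G,\mathcal{P})\le h_{top}(X,G)$.

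For the hard direction (ii) I would run the Misiurewicz construction, using the standard equivalent description of $h_{top}(X,G)$ through $(F_n,\epsilon)$-separated sets. Fix $\epsilon>0$; for each $n$ let $E_n$ be a maximal $(F_n,\epsilon)$-separated set, so that $|F_n|^{-1}\log|E_n|$ approximates $h_{top}(X,G)$ as $\epsilon\to0$. Set $\nu_n=|E_n|^{-1}\sum_{x\in E_n}\delta_x$ and $\mu_n=|F_n|^{-1}\sum_{g\in F_n}g_*\nu_n$, and pass to a weak-$*$ limit $\mu=\lim_j\mu_{n_j}$. That $\mu\in M(X,G)$ follows from the F{\o}lner property: for each $h\in G$ the discrepancy $h_*\mu_n-\mu_n$ is carried by the part of the average indexed by $hF_n\triangle F_n$, whose relative size tends to $0$. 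Choosing a finite partition $\mathcal{P}$ with $\mathrm{diam}(\mathcal{P})<\epsilon$ and $\mu(\partial\mathcal{P})=0$, the $\epsilon$-separation forces the points of $E_n$ into distinct atoms of $\mathcal{P}_{F_n}$, so $H_{\nu_n}(\mathcal{P}_{F_n})=\log|E_n|$.

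The crux, and the step I expect to be the main obstacle, is to convert $|F_n|^{-1}H_{\nu_n}(\mathcal{P}_{F_n})$ into a lower bound for $h_\mu(G,\mathcal{P})$. Over $\Z$ one tiles the interval $\{0,\dots,n-1\}$ by translates of a fixed block and uses concavity of $H$ in the measure together with subadditivity to trade $\nu_n$-entropy for $\mu_n$-entropy and then for $\mu$-entropy; in the amenable setting the arithmetic tiling is unavailable and must be replaced by the Ornstein--Weiss quasi-tiling of $F_{n_j}$ by right translates $Fc$ of a fixed F{\o}lner set $F$, with small overlap and small uncovered remainder, and controlling these two error terms uniformly is the delicate point. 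Concretely, since $\mathcal{P}_{Fc}=c^{-1}\mathcal{P}_F$, subadditivity bounds $H_{\nu_{n_j}}(\mathcal{P}_{F_{n_j}})$ by $\sum_c H_{c_*\nu_{n_j}}(\mathcal{P}_F)$ up to $o(|F_{n_j}|)$; concavity of $H$ and the definition of $\mu_{n_j}$ replace the average of these by $H_{\mu_{n_j}}(\mathcal{P}_F)$ up to $o(|F_{n_j}|)$; the boundary condition $\mu(\partial\mathcal{P})=0$ lets weak-$*$ convergence pass this to $H_\mu(\mathcal{P}_F)$; and dividing by $|F|$ and letting $F$ run along $\{F_n\}$ recovers $h_\mu(G,\mathcal{P})\ge\limsup_j|F_{n_j}|^{-1}\log|E_{n_j}|$. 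Letting $\epsilon\to0$ and taking the supremum over $\mathcal{P}$ gives $h_\mu(X,G)\ge h_{top}(X,G)$, which with (i) yields $\sup_{M(X,G)}h_\mu=h_{top}(X,G)$.

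Finally, for (iii) I would invoke the ergodic decomposition $\mu=\int\mu_\omega\,dP(\omega)$ of an arbitrary $\mu\in M(X,G)$ together with the affinity of the map $\mu\mapsto h_\mu(X,G)$, which in the amenable setting is a consequence of the Shannon--McMillan--Breiman theorem (Theorem \ref{th-3-2}) applied to the ergodic components. Affinity gives $h_\mu(X,G)=\int h_{\mu_\omega}(X,G)\,dP(\omega)\le\sup_{E(X,G)}h_{\mu_\omega}$, hence $\sup_{M(X,G)}h_\mu\le\sup_{E(X,G)}h_\mu$, while the reverse inequality is immediate from $E(X,G)\subset M(X,G)$. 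Combining (i), (ii) and (iii) yields $h_{top}(X,G)=\sup_{\mu\in M(X,G)}h_\mu(X,G)=\sup_{\mu\in E(X,G)}h_\mu(X,G)$, as claimed.
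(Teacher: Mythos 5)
The paper offers no proof of Theorem \ref{th-5-1} at all: it is imported as a known result with a citation to Ollagnier--Pinchon \cite{OP} and Stepin--Tagi-Zade \cite{ST}, so there is no internal argument to compare yours against. What you have written is a reconstruction of the classical Misiurewicz scheme adapted to amenable groups, and as an outline it is essentially sound: the easy direction via a compact inner approximation $\mathcal{Q}$ of $\mathcal{P}$ and the estimate $H_\mu(\mathcal{P}_{F_n}\mid\mathcal{Q}_{F_n})\le|F_n|H_\mu(\mathcal{P}\mid\mathcal{Q})$ is standard (your Hamming-ball counting is one legitimate way to avoid the spurious $\log 2$ that the naive count produces); the construction $\nu_n\mapsto\mu_n\mapsto\mu$ with invariance coming from the F{\o}lner property is correct, as is $H_{\nu_n}(\mathcal{P}_{F_n})=\log|E_n|$ when $\mathrm{diam}(\mathcal{P})<\epsilon$; and you correctly identify the genuine crux, namely trading $|F_{n}|^{-1}H_{\nu_{n}}(\mathcal{P}_{F_{n}})$ for $|F|^{-1}H_{\mu_{n}}(\mathcal{P}_F)$ in the absence of an exact tiling. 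Two caveats on where the sketch leans hardest on external machinery: first, the quasi-tiling subadditivity step is delicate (the overlaps and the uncovered remainder each contribute error terms of order $\eta|F_{n_j}|\log\#\mathcal{P}$ that must be made uniform in $j$; an alternative that avoids Ornstein--Weiss tilings altogether is a Shearer-type inequality applied to the cover $\{Fg: g\in F_{n_j}\}$ of $F_{n_j}$, whose covering multiplicity is $|F|$ away from a F{\o}lner boundary); second, the affinity of $\mu\mapsto h_\mu$ over the full ergodic decomposition is itself a nontrivial theorem for amenable groups and does not follow merely from Theorem \ref{th-3-2} as stated, which is formulated only for ergodic measures --- though for step (iii) one only needs the inequality $h_\mu\le\int h_{\mu_\omega}\,dP(\omega)$ in the direction that concavity plus almost-subadditivity of static entropy supplies. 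With those known results granted, your argument proves the theorem; it simply supplies the proof that the paper deliberately treats as a black box.
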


We note that all the entropies in the above theorem do not dependent on the choice of F{\o}lner sequence.

Then by Theorem \ref{th-3-3}, \ref{th-4-1} and \ref{th-5-1},
\begin{align*}
    h_{top}(X,G)&=h_{top}(X,G,\{F_n\})\\
    &=\sup\{h_{\mu}(X,G,\{F_n\}):\mu\in E(X,G)\}\\
    &=\sup\{\underline h_{\mu}^{loc}(\{F_n\}):\mu\in E(X,G)\}\\
    &\le\sup\{\underline h_{\mu}^{loc}(\{F_n\}):\mu\in M(X)\}\\
    &= h_{top}^B(X,\{F_n\}),
\end{align*}
where $h_{top}(X,G,\{F_n\})$ and $h_{\mu}(X,G,\{F_n\})$ denote the topological entropy and measure theoretic entropy of $(X,G)$ for the F{\o}lner sequence
$\{F_n\}$ respectively. This completes the proof of Theorem \ref{th-1-1}.

To end this paper, we ask the following question:
\begin{question}
Can Theorem \ref{th-1-1} be proved through a pure topological way?
\end{question}

{\bf Acknowledgements}
The research was supported by the National Basic Research Program of China (Grant No. 2013CB834100) and the National Natural
Science Foundation of China (Grant No. 11271191, 11431012).

\end{document}